\setlist[itemize]{topsep=0.5ex,itemsep=0ex,parsep=0.4ex}
\setlist[enumerate]{topsep=0.5ex,itemsep=0ex,parsep=0.4ex}
\crefname{lem}{Lemma}{Lemmas}
\crefname{thm}{Theorem}{Theorems}
\crefname{cor}{Corollary}{Corollaries}
\crefname{prop}{Proposition}{Propositions}
\crefname{conj}{Conjecture}{Conjectures}
\crefname{open}{Open Problem}{Open Problems}
\crefname{claim}{Claim}{Claims}
\newcommand{\defn}[1]{\textcolor{Maroon}{\emph{#1}}}
\def\NAT@spacechar{~}
\renewcommand{\geq}{\geqslant}
\renewcommand{\leq}{\leqslant}
\newcommand{\StrongProd}{\mathbin{\boxtimes}}
\DeclareMathOperator{\tw}{tw}
\DeclareMathOperator{\pw}{pw}
\newcommand{\subsetsim}{\mathrel{\substack{\textstyle\subset\\[-0.6ex]\textstyle\sim\\[-0.4ex]}}}
\renewcommand{\thefootnote}{\fnsymbol{footnote}}
\theoremstyle{plain}
\newtheorem{thm}{Theorem}
\newtheorem{lem}[thm]{Lemma}
\newtheorem{cor}{Corollary}
\newtheorem{obs}{Observation}
\newcommand{\PP}{\mathcal{P}}
\newcommand{\QQ}{\mathcal{Q}}
\newcommand{\NN}{\mathbb{N}}
\author[Marc Distel, Kevin Hendrey, Nikolai Karol, David~R.~Wood, Jung Hon Yip]{Marc Distel  \quad Kevin Hendrey  \quad Nikolai Karol  \\ \centering David~R.~Wood \quad Jung Hon Yip}
\title[Treewidth 2 in the Planar Graph Product Structure Theorem]{Treewidth 2 in the Planar Graph Product Structure Theorem}
\affiliation{School of Mathematics, Monash   University, Melbourne, Australia}
\keywords{planar graph, product structure}
\begin{document}
\publicationdata{vol. 27:2}{2025}{8}{10.46298/dmtcs.14785}{2024-11-18; 2024-11-18; 2025-03-10}{2025-03-11}
\maketitle

\begin{abstract}
\medskip We prove that every planar graph is contained in $H_1\boxtimes H_2\boxtimes K_2$ for some graphs $H_1$ and $H_2$ both with treewidth 2. This resolves a question of Liu, Norin and Wood [arXiv:2410.20333]. We also show this result is best possible in the following sense: for any $c \in \NN$, there is a planar graph $G$ such that for any tree $T$ and graph $H$ with $\tw(H) \leq 2$, $G$ is not contained in $H \boxtimes T \boxtimes K_c$.
\end{abstract}

\renewcommand{\thefootnote}
{\arabic{footnote}}

\section{\boldmath Introduction}
\label{Introduction}

Graph product structure theory describes graphs in complicated graph classes as subgraphs of products of graphs in simpler graph classes, typically with bounded treewidth or bounded pathwidth. As defined in \cref{Treewidth}, the treewidth of a graph $G$, denoted by \defn{$\tw(G)$}, is the standard measure of how similar $G$ is to a tree. As illustrated in \cref{ProductExample}, the \defn{strong product} $A \StrongProd B$ of
graphs $A$ and $B$ has vertex-set 
$V(A) \times V(B)$,
\begin{wrapfigure}{r}{60mm}
\centering
\vspace*{-1ex}
\includegraphics{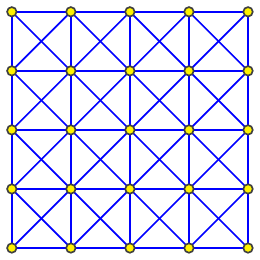}
\vspace*{-1ex}
\caption{\label{ProductExample} Strong product of paths.}
\end{wrapfigure}
where distinct vertices $(v,x), (w,y)$ are adjacent if:
\begin{itemize}
\item $v = w$ and $xy \in E(B)$, or 
\item $x = y$ and $vw \in E(A)$, or
\item $vw\in E(A)$ and $xy \in E(B)$.
\end{itemize}

\smallskip
The following Planar Graph Product Structure Theorem is the classical example of a graph product structure theorem. Here, a graph $H$ is \defn{contained} in a graph $G$ if $H$ is isomorphic to a subgraph of $G$, written \defn{$H \subsetsim G$}. 

\clearpage
\begin{thm}
\label{PGPST} 
For every planar graph $G$:
\begin{enumerate}[(a)]
\item $G\subsetsim H \StrongProd P$ for some graph $H$ with $\tw(H)\leq 6$ and path $P$ \textup{\citep{UWY22}},
\item $G\subsetsim H \StrongProd P \StrongProd K_2$ for some graph $H$ with $\tw(H)\leq 4$ and path $P$ \textup{\citep{UWY22}},
\item $G\subsetsim H \StrongProd P \StrongProd K_3$ for some graph $H$ with $\tw(H)\leq 3$ and path $P$ \textup{\citep{DJMMUW20}}.
\end{enumerate}
\end{thm}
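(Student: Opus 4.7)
The plan is to prove all three parts through a common framework based on BFS layerings. Assume without loss of generality that $G$ is a planar triangulation (adding edges only strengthens the conclusion). Fix a root vertex $r$ and let $V_0 = \{r\}, V_1, V_2, \ldots$ be its BFS layers, with $T$ the corresponding BFS tree. The core task in each part is to construct a partition $\mathcal{P}$ of $V(G)$ into \emph{vertical paths} -- sets of vertices each inducing a subpath of some root-to-leaf path in $T$ -- and to form the quotient graph $H := G / \mathcal{P}$, whose vertices are the parts and whose edges are inherited from $G$.

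Once $\mathcal{P}$ is fixed, the embedding $G \subsetsim H \StrongProd P$ (with $P$ the path $V_0 V_1 V_2 \cdots$) is immediate: each vertical path meets each layer in at most one vertex, so the map sending $v$ to $(B, V_{\ell(v)})$, where $B \in \mathcal{P}$ contains $v$, is the required injection, and edges of $G$ translate to edges of $H \StrongProd P$ by a short case analysis on whether an edge stays inside one part or crosses between parts. The extra $K_2$ or $K_3$ factor in parts (b) and (c) is produced by additionally $2$- or $3$-colouring the vertices within each layer so that edges internal to a part also respect the colouring; this simultaneously permits a more refined partition and a lower-treewidth quotient.

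The hard part in every case is bounding $\tw(H)$. For part (c), one uses a Schnyder wood of the triangulation: the three spanning trees it produces can be combined to define a vertical-path partition whose quotient admits a tree decomposition of width at most $3$, with the Schnyder colours realising the $K_3$ factor. For part (a), one instead constructs a vertical-path partition directly and shows that its quotient admits a tree decomposition of width at most $6$, using small separators in $H$ that come from face-walks in the planar triangulation. Part (b) interpolates between these: a suitable $2$-colouring of the vertices refines the partition of part (a) enough to drop the treewidth of the quotient from $6$ to $4$, at the cost of the additional $K_2$ factor. In every case the delicate combinatorial step is precisely this treewidth bound; the remainder of the argument -- the construction of $\mathcal{P}$ from the BFS layering and the verification of the embedding -- is mechanical and uniform across the three parts.
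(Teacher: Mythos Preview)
This theorem is not proved in the paper; it is stated as background and attributed to \citep{UWY22} and \citep{DJMMUW20}. There is therefore no proof in the paper to compare against.

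That said, your sketch is in the right general spirit (BFS layering plus a partition into vertical paths, with the quotient playing the role of $H$), but it contains two substantive inaccuracies and is too vague to constitute a proof. First, the $K_2$ and $K_3$ factors in parts (b) and (c) do \emph{not} arise from a $2$- or $3$-colouring of vertices within each layer; they arise because each part of the partition is allowed to be a union of at most $2$ (respectively $3$) vertical paths, so a part can meet a single BFS layer in up to $2$ (respectively $3$) vertices. Second, the proof of part (c) in \citep{DJMMUW20} does not use Schnyder woods; it uses a recursive decomposition into \emph{tripods} (three vertical paths meeting at a face), and the treewidth-$3$ bound on $H$ comes from the recursion tree of that decomposition. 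Finally, you explicitly defer ``the delicate combinatorial step'' of bounding $\tw(H)$ in every case, which is the entire content of the theorem; without that argument, what you have written is an outline of the setup rather than a proof.
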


\citet{DJMMUW20} first proved \cref{PGPST}(a) with $\tw(H)\leq 8$. In follow-up work, \citet{UWY22} improved the bound to $\tw(H)\leq 6$. Part (b) is due to Dujmovi\'c, and is presented in \citep{UWY22}. Part (c) is in the original paper of  \citet{DJMMUW20}. \citet{ISW24} gave a new proof of part (c).

\cref{PGPST} provides a powerful tool for studying questions about planar graphs, by reducing to graphs of bounded treewidth. Indeed, this result has been the key for resolving several open problems regarding queue layouts~\cite{DJMMUW20}, nonrepetitive colourings~\cite{DEJWW20}, centred colourings~\cite{DFMS21}, adjacency labelling schemes~\cite{GJ22,BGP22,EJM23,DEGJMM21}, twin-width~\cite{BKW,KPS24,JP22}, infinite graphs~\cite{HMSTW}, and comparable box dimension~\cite{DGLTU22}. In several of these applications, because the dependence on $\tw(H)$ is often exponential, the best bounds are obtained by applying the 3-term product in \cref{PGPST}(c). 

The $\tw(H)\leq 3$ bound in \cref{PGPST}(c) is best possible in any result saying that every planar graph is contained $H \StrongProd P \StrongProd K_c$ where $P$ is a path (see \citep{DJMMUW20}). \citet{LNW} relaxed the assumption that $P$ is a path, and studied products of two graphs of bounded treewidth. They asked whether  every planar graph is contained in $H_1 \StrongProd H_2 \StrongProd K_c$ for some graphs $H_1$ and $H_2$ with $\tw(H_1)\leq 2$ and $\tw(H_2)\leq 2$. We answer this question in the affirmative. 

\begin{thm}
\label{Planar222} 
Every planar graph $G$ is contained in $H_1 \StrongProd H_2 \StrongProd K_2$ for some graphs $H_1$ and $H_2$ with $\tw(H_1)\leq 2$ and $\tw(H_2)\leq 2$.
\end{thm}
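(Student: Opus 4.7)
The plan is to start from Part (b) of \cref{PGPST}, which gives $G \subsetsim H \StrongProd P \StrongProd K_2$ for some graph $H$ with $\tw(H) \leq 4$ and some path $P$. The task then reduces to proving the decomposition lemma: $H \StrongProd P \subsetsim H_1 \StrongProd H_2$ for some $H_1, H_2$ with $\tw(H_1), \tw(H_2) \leq 2$; composing with the inherited $K_2$ then yields the theorem.

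A general treewidth-$4$ graph need not embed as a subgraph of a strong product of two treewidth-$2$ graphs in the required way, so I would exploit the specific structure of $H$ coming from the proof of Part (b) in \citet{UWY22}: the tree decomposition of $H$ has bags of size at most $5$, each arising as a union of at most five \emph{vertical paths} in a BFS spanning tree of a planar triangulation of $G$. My plan is to partition the five vertical paths per bag into two groups (say of sizes $3$ and $2$) and use this partition to build sub-decompositions that combine with the path factor $P$ to yield $H_1$ and $H_2$, each with $\tw \leq 2$. By the lower bound stated in the abstract, neither $H_i$ can be a tree; the necessary cyclic structure would arise from absorbing $P$ into one factor together with the appropriate sub-structure of $H$ (noting that a path strong-producted with a carefully chosen forest can still be series-parallel, and the $K_2$ factor gives room to keep that product treewidth-$2$).

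The main obstacles are: (i) making the $3/2$ partition of vertical paths globally consistent across the tree decomposition, since each vertical path appears in many bags but its assignment to $H_1$ or $H_2$ must be fixed, while still covering every cross-column edge of $H$; and (ii) absorbing the path $P$ (indexing BFS layers) into one of the $H_i$ without breaking the treewidth-$2$ bound. The pre-existing $K_2$ factor from Part (b) should provide the parity flexibility needed to separate consecutive BFS layers and to realize every edge of $G$ under the composite embedding into $H_1 \StrongProd H_2 \StrongProd K_2$.
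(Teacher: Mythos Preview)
Your proposal takes a route that is entirely different from the paper's, and the obstacles you flag are not merely technical---they are where the approach breaks.

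The paper does not touch \cref{PGPST} at all. Instead it uses Thomassen's theorem (\cref{PlanarPartition}) that every planar graph partitions into two induced triangle-forests, observes that every triangle-forest has a matching whose contraction yields a forest (\cref{PSeudoForestMatching}), and combines these to get a matching $M$ in $G$ such that $G/M$ partitions into two induced forests $F_1,F_2$. Then \cref{JoinTreewidth} gives $(G/M)^{+}\subsetsim F_1^{+}\boxtimes F_2^{+}$, and since $G\subsetsim (G/M)\boxtimes K_2$ one obtains $G\subsetsim F_1^{+}\boxtimes F_2^{+}\boxtimes K_2$ with each $F_i^{+}$ an apex-forest (hence treewidth~$\le 2$). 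The $K_2$ here comes from the matching contraction, not from \cref{PGPST}(b).

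In your plan, the step ``absorb $P$ into one factor'' is the real gap. The assertion that ``a path strong-producted with a carefully chosen forest can still be series-parallel'' is false for any forest containing a long path: $P_n\boxtimes P_n$ contains the $n\times n$ grid and has treewidth $\Theta(n)$. So if your $3$-column piece of $H$ contains even a single long induced path (which it will, since the columns of $H$ arise from BFS vertical paths of unbounded length), taking its strong product with $P$ destroys the treewidth-$2$ bound. You would need the sub-structure absorbing $P$ to have \emph{bounded diameter components}, which is incompatible with covering the long vertical paths. The other obstacle you list---making the $3/2$ split globally consistent across all bags of the tree-decomposition---is also genuine: it is a hypergraph $2$-colouring constraint with no evident reason to be satisfiable, and even when it is, a vertex $2$-partition of $H$ does not by itself produce a product embedding $H\boxtimes P\subsetsim H_1\boxtimes H_2$ without something like \cref{JoinTreewidth}, which introduces an apex vertex rather than a path factor.
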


We actually prove a strengthening of \cref{Planar222} that holds for a more general class of graphs $G$, and with a more precise statement about the structure of $H_1$ and $H_2$; see \cref{Apex} below. We also show that \cref{Planar222} is best possible in the following sense. 

\begin{thm}
\label{PlanarLowerBound}
For any integer $c\geq 1$ there is a planar graph $G$ such that for any tree $T$ and graph $H$ with $\tw(H)\leq 2$, $G$ is not contained in $H \StrongProd T \StrongProd K_c$. 
\end{thm}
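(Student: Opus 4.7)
My plan is a proof by contradiction using an explicit family of planar graphs. Fix $c \geq 1$ and take $G_n$ to be a planar graph with rigid cyclic structure and unbounded treewidth---a natural choice is the \emph{nested triangles graph} consisting of $n$ concentric triangles joined by radial edges, or a cylindrical triangular grid such as $C_n \StrongProd P_n$. The goal is to show that for $n$ sufficiently large in terms of $c$, $G_n \not\subsetsim H \StrongProd T \StrongProd K_c$ for any tree $T$ and any graph $H$ with $\tw(H) \leq 2$.

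Suppose such an embedding exists, and write it as $\phi\colon V(G_n)\to V(H)\times V(T)\times V(K_c)$ with coordinate maps $h,\tau,\kappa$. I would extract three structural consequences. First, each fibre $G_n[\tau^{-1}(t)]$ is a subgraph of $H\StrongProd K_c$ and hence has treewidth at most $3c-1$, since $\tw(H)\leq 2$. Second, because $T$ is acyclic, the $\tau$-image of any cycle of $G_n$ is a subtree of $T$, so every cycle of $G_n$ ``collapses'' along $T$. Third, the projection $h$ is a homomorphism (allowing loops) from $G_n$ to the $K_4$-minor-free graph $H$, and so dense planar substructures in $G_n$ must be resolved using the $T$- and $K_c$-coordinates rather than by $h$. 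Combining these, I would argue that the many nested cycles of $G_n$ are forced to pack too densely into common fibres, eventually violating the $K_c$-multiplicity budget.

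The main obstacle is that the classical width parameters of $H\StrongProd T\StrongProd K_c$ are \emph{not} bounded under the given hypotheses: for instance $\tw(P_n\StrongProd P_n)=\Theta(n)$, so no direct monotonicity argument on treewidth, Hadwiger number, or layered treewidth can work. The proof must use the acyclicity of $T$ together with the $K_4$-minor-free structure of $H$ \emph{simultaneously}, rather than either alone. I expect the crux to be isolating the right planar-graph invariant---plausibly a tree-row-treewidth-style parameter, or a cycle-separator quantity counting how many nested separating cycles can coexist---that is bounded by a function of $c$ for every admissible product $H\StrongProd T\StrongProd K_c$, yet grows to infinity along the family $G_n$. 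Once that invariant is in hand, choosing $n$ to drive it above the threshold completes the proof.
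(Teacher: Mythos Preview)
Your proposal has a fatal flaw at the very first step: the planar graphs you suggest are \emph{not} counterexamples. The nested triangles graph is a subgraph of $C_3 \StrongProd P_n$, and $C_n \StrongProd P_n$ is literally already a product of a graph of treewidth~$2$ and a path. In both cases you may take $H$ to be the cycle factor, $T$ to be the path, and $c=1$, so these graphs \emph{are} contained in $H \StrongProd T \StrongProd K_c$ with $\tw(H)\leq 2$. Any graph you build to witness the theorem must fail to decompose as (treewidth-$2$ quotient) $\times$ (tree quotient) with bounded overlap, and highly regular grid-like graphs are precisely the easiest ones to decompose this way.

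Beyond the choice of $G$, the argument itself is not a proof but a list of hopes. You correctly observe that no monotone width parameter of $H\StrongProd T\StrongProd K_c$ is bounded, and then say you ``expect the crux to be isolating the right planar-graph invariant'' --- but you never name one, and the three structural consequences you list (bounded treewidth of $\tau$-fibres, $\tau$-images of cycles being subtrees, $h$ being a loop-allowing homomorphism) are each too weak individually and you give no mechanism for combining them.

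The paper takes a completely different and much more concrete route. It constructs $G$ as a \emph{double distension of a fan}: start with a fan, replace each edge by a double-fan of controlled size, then do this again. The key is that fans and double-fans contain dominant vertices, which severely constrain any tree-partition (a dominant vertex forces the partition to be a star centred at its part). Via the partition reformulation of product containment, the paper shows that for any tree-partition $\PP$ and any partition $\QQ$ with $|P\cap Q|\leq c$, one can locate a $4$-clique in $G$ whose four vertices lie in four distinct parts of $\QQ$. This forces $K_4 \subsetsim G/\QQ \subsetsim H$, hence $\tw(H)\geq 3$. The invariant you were searching for is, in effect, ``$G/\QQ$ contains $K_4$ for every such $\QQ$'', and the paper engineers $G$ specifically so that its many dominant vertices and long paths make this unavoidable.
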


We conclude this introduction by mentioning an open problem: Does \cref{Planar222} hold with
$H_1\boxtimes H_2\boxtimes K_2$ replaced by $H_1\boxtimes H_2$? 

\section{Treewidth}
\label{Treewidth}

We consider finite simple undirected graphs $G$ with vertex-set $V(G)$ and edge-set $E(G)$. For a tree $T$ with $V(T)\neq\emptyset$, a \defn{$T$-decomposition} of a graph $G$ is a collection $(B_x:x \in V(T))$ such that:
\begin{itemize}
    \item $B_x\subseteq V(G)$ for each $x\in V(T)$, 
    \item for every edge ${vw \in E(G)}$, there exists a node ${x \in V(T)}$ with ${v,w \in B_x}$, and 
    \item for every vertex ${v \in V(G)}$, the set $\{ x \in V(T) : v \in B_x \}$ induces a non-empty (connected) subtree of $T$. 
\end{itemize}
The \defn{width} of such a $T$-decomposition is ${\max\{ |B_x| : x \in V(T) \}-1}$. A \defn{tree-decomposition} is a $T$-decomposition for any tree $T$. 
A \defn{path-decomposition} is a $P$-decomposition for any path $P$, denoted by the corresponding sequence of bags. 
The \defn{treewidth} of a graph $G$, denoted \defn{$\tw(G)$}, is the minimum width of a tree-decomposition of $G$. 
The \defn{pathwidth} of a graph $G$, denoted \defn{$\pw(G)$}, is the minimum width of a path-decomposition of $G$. By definition, $\tw(G)\leq\pw(G)$ for every graph $G$. Treewidth is the standard measure of how similar a graph is to a tree. Indeed, a connected graph has treewidth at most 1 if and only if it is a tree. It is an important parameter in structural graph theory, especially Robertson and Seymour's graph minor theory, and also in algorithmic graph theory, since many NP-complete problems are solvable in linear time on graphs with bounded treewidth. See \citep{HW17,Bodlaender98,Reed97} for surveys on treewidth. 

\section{Proof of \cref{Planar222}}
\label{Proofs}

Let \defn{$G^+$} be the graph obtained from a graph $G$ by adding one new vertex adjacent to every vertex in $G$. In any graph, a vertex $v$ is \defn{dominant} if $v$ is adjacent to every other vertex. We use the following lemma by \citet{LNW}. We include the proof for completeness.  

\begin{lem}[\citep{LNW}]
\label{JoinTreewidth}
For any graph $G$ and any vertex-partition $\{V_1,V_2\}$ of  $G$, \[G^+ \subsetsim G[V_1]^+ \StrongProd G[V_2]^+.\]
\end{lem}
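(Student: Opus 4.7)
The plan is to exhibit an explicit embedding. Write $z$ for the dominant vertex added to $G$ in forming $G^+$, and for $i\in\{1,2\}$ write $z_i$ for the dominant vertex added to $G[V_i]$ in forming $G[V_i]^+$. Define a map
\[
\phi: V(G^+) \to V(G[V_1]^+) \times V(G[V_2]^+)
\]
by $\phi(v)=(v,z_2)$ if $v\in V_1$, $\phi(v)=(z_1,v)$ if $v\in V_2$, and $\phi(z)=(z_1,z_2)$.

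Injectivity of $\phi$ is immediate since $z_1\notin V_1$ and $z_2\notin V_2$, so the three groups of image vertices are pairwise disjoint. The main (and essentially only) work is to verify that $\phi$ sends edges of $G^+$ to edges of the strong product, via a short case analysis: for an edge $vw\in E(G)$ with both endpoints in the same part $V_i$, the images of $v$ and $w$ agree in one coordinate (namely $z_{3-i}$) and are adjacent in $G[V_i]^+$ in the other, giving an edge of the first type in the strong product; for an edge $vw\in E(G)$ with $v\in V_1$ and $w\in V_2$, the two coordinates of $\phi(v)=(v,z_2)$ and $\phi(w)=(z_1,w)$ differ, but $vz_1\in E(G[V_1]^+)$ and $z_2w\in E(G[V_2]^+)$ by dominance of $z_1,z_2$, giving an edge of the third type; and for an edge from $z$ to any $v\in V_i$, the images $\phi(z)=(z_1,z_2)$ and $\phi(v)$ agree in the coordinate indexed by $3-i$ and are adjacent in the other coordinate by dominance of $z_i$.

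There is no real obstacle here: the construction is the obvious ``product with a dominant coordinate'' embedding, and once $\phi$ is written down the verification is a routine unpacking of the three adjacency conditions in the definition of the strong product. No non-trivial structural fact about $G$ or the partition $\{V_1,V_2\}$ is needed.
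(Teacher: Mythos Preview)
Your proof is correct and is essentially identical to the paper's: both define the same embedding $\phi$ (the paper uses $r,r_1,r_2$ where you use $z,z_1,z_2$) and verify adjacency by the same case analysis. The only cosmetic difference is that the paper dispatches the edges incident to the dominant vertex by observing that $(r_1,r_2)$ is dominant in the product, whereas you handle them explicitly.
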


\begin{proof}
Let $Q:= G[V_1]^+ \StrongProd G[V_2]^+$, where $r_i$ is the dominant vertex in $G[V_i]^+$, for $i\in\{1,2\}$. Let $r$ be the dominant vertex in $G^+$. Map $r$ to $(r_1,r_2)$ in $Q$, which is adjacent to every other vertex in $Q$. Map each vertex $v\in V_1$ to $(v,r_2)$ in $Q$. Map each vertex $w\in V_2$ to $(r_1,w)$ in $Q$. For each edge $vv'\in E(G[V_1])$, the images of $v$ and $v'$ are adjacent in $Q$. For each edge $ww'\in E(G[V_2])$, the images of $w$ and $w'$ are adjacent in $Q$. For all vertices $v\in V_1$ and $w\in V_2$, the images of $v$ and $w$ are adjacent in $Q$. Hence, the above mapping shows that $G^+$ is isomorphic to a subgraph of $Q$.   
\end{proof}

If every planar graph had a vertex-partition into two induced forests, then the Four-Colour Theorem would follow. \citet{CK69} constructed planar graphs that have no vertex-partition into two induced forests. On the other hand, \citet[Theorem~4.1]{Thomassen95} showed the following analogous result where the forest requirement is relaxed\footnote{\citet[Theorem~4.1]{Thomassen95} proved \cref{PlanarPartition} for planar triangulations, which implies the result for general planar graphs, since any subgraph of a triangle-forest is a triangle-forest. \cref{PlanarPartition} was rediscovered by 
\citet{KRU24}.}. A \defn{triangle-forest} is a graph in which every cycle is a triangle. 

\begin{lem}[\citep{Thomassen95}]
\label{PlanarPartition}
Every planar graph has a vertex-partition into two induced triangle-forests.
\end{lem}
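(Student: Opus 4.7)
The plan is to reduce to planar triangulations and then prove the triangulation case by induction on $|V(G)|$. For the reduction, embed the given planar graph $G$ in a planar triangulation $G^*$ on the same vertex set; any partition of $V(G^*)$ into two induced triangle-forests of $G^*$ also gives a partition of $V(G)$ into induced triangle-forests, because the triangle-forest property is closed under taking subgraphs.

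For the triangulation case, I would prove a stronger statement by induction on $|V(G)|$: every plane triangulation $G$ admits a vertex-partition $\{V_1,V_2\}$ into induced triangle-forests such that the $2$-colouring of the outer triangle $T=v_1v_2v_3$ can be prescribed arbitrarily among the $2^3$ possibilities. The base case $G=K_3$ is immediate, since each colour class is a subgraph of a triangle and thus a triangle-forest. For the inductive step I would first dispose of separating triangles: if $G$ contains a separating triangle $T'$, split $G$ along $T'$ into two smaller plane triangulations each having $T'$ as an outer face, apply induction with matching prescribed colourings on $T'$, and glue the two partitions. Otherwise, $G$ has no separating triangle, and I would select a non-outer vertex $v$ of degree at most $5$ (which exists by Euler's formula applied to $G$ minus a small set near the outer face), delete it, re-triangulate the resulting face, apply induction to the smaller plane triangulation keeping the same prescribed outer colouring, and finally restore $v$ with whichever colour does not create a non-triangular cycle through $v$.

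The main obstacle is completing this last step. A non-triangular cycle through $v$ in $G[V_i\cup\{v\}]$ corresponds to a pair of neighbours $a,b$ of $v$ lying in $V_i$ together with a path from $a$ to $b$ inside $G[V_i]$ of length at least two that does not lie on a triangle containing $v$. The inductive colouring does not a priori control these paths, so some colour choices for $v$ may be blocked by both classes simultaneously, and one must show that at least one choice is always safe. I expect this to require a careful analysis split on $\deg(v)\in\{3,4,5\}$ and on the cyclic order in which $v$'s neighbours appear on the link cycle of $v$, and possibly a further refinement of the inductive statement (for instance, forbidding long monochromatic substructures on $v$'s link after the re-triangulation). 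Engineering the re-triangulation and the auxiliary constraints so that this local step always succeeds, while still being compatible with an arbitrary prescribed colouring of the outer triangle, is the delicate heart of the argument.
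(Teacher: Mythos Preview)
The paper does not actually prove this lemma: it is quoted from \cite{Thomassen95}, with only a one-line footnote explaining the reduction from arbitrary planar graphs to planar triangulations. Your reduction step matches that footnote exactly and is correct.

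Your proposed induction for the triangulation case, however, is not a proof but a plan with an acknowledged gap, and the gap is real. After deleting a low-degree interior vertex $v$, re-triangulating, and colouring by induction, you must place $v$ in one of the two classes without creating a cycle of length at least four through $v$ in that class. As you note, such a cycle arises whenever two neighbours $a,b$ of $v$ lie in the same class and are joined in that class by a path of length at least two; nothing in your inductive hypothesis prevents this from happening in \emph{both} classes simultaneously. Strengthening the hypothesis to allow an arbitrary prescribed colouring of the outer triangle does not help here, since the link of $v$ is not the outer triangle and you have no control over it. The ``further refinement'' you allude to---constraining the colouring on $v$'s link---is not a detail but the entire difficulty, and you have not indicated what refinement would work or why it would remain inductively stable under re-triangulation.

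Thomassen's actual argument in \cite{Thomassen95} does not proceed by deleting a low-degree vertex. It is an extension argument in the spirit of his $5$-list-colouring proof: one works with near-triangulations, carries a carefully chosen invariant on the outer boundary (stronger than merely prescribing colours on three vertices), and extends the partition one outer vertex at a time. Getting the boundary invariant right is the substance of the proof, and your sketch does not reach it.
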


We need the following elementary property of triangle-forests. 

\begin{lem}
\label{PSeudoForestMatching}
Every triangle-forest $G$ has a matching $M$ such that $G/M$ is a forest. 
\end{lem}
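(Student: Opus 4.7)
The plan is to induct on $|V(G)|$ using the block structure of $G$. The key structural fact to establish first is that in a triangle-forest every $2$-connected block is either a single edge or a triangle: a $2$-connected subgraph on at least three vertices contains a cycle, which must be a triangle, and a fourth vertex in the block would, via two internally disjoint paths to the triangle, yield a cycle of length at least $4$, contradicting the hypothesis.

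We may assume $G$ is connected (otherwise handle components independently). The trivial cases are when $G$ is edgeless or a single edge (take $M = \emptyset$), or when $G$ is a single triangle (take $M$ to be any one of its three edges, so that $G/M$ is a single edge). Otherwise $G$ has at least two blocks, so its block-cut tree has a leaf block $B$, and such a $B$ contains exactly one cut vertex $c$. I split on the type of $B$.

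If $B$ is a pendant edge $cv$ (so $v$ has degree $1$ in $G$), apply induction to $G - v$ to obtain a matching $M'$ with $(G-v)/M'$ a forest, and take $M := M'$; re-attaching the pendant vertex $v$ to the class containing $c$ preserves being a forest. If $B$ is a triangle with cut vertex $c$ and non-cut vertices $a, b$ (whose only neighbours in $G$ lie in $\{a, b, c\}$), apply induction to $G' := G - \{a, b\}$, a triangle-forest with one fewer triangle, to obtain a matching $M'$ with $G'/M'$ a forest, and set $M := M' \cup \{ab\}$. Since $a, b \notin V(G')$, the set $M$ is still a matching, and $G/ab$ is exactly $G'$ with a single pendant vertex (the contraction of $ab$, whose only neighbour is $c$) attached at $c$; therefore $G/M = (G/ab)/M'$ is $G'/M'$ with a pendant attached, hence still a forest.

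The main obstacle is ensuring that edges chosen for different triangles do not collide at a shared vertex. The block-cut tree handles this cleanly: at a leaf triangle block, the two non-cut vertices lie in no other block, so the edge joining them can be added to the matching without any risk of sharing an endpoint with an edge chosen by the inductive hypothesis.
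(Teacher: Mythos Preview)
Your proof is correct and follows essentially the same approach as the paper's: induct on $|V(G)|$ via the block structure, handling leaf blocks by deleting a pendant vertex (the edge case) or deleting the two non-cut vertices of a leaf triangle and adding their joining edge to the matching. The only cosmetic difference is that you front-load the classification of $2$-connected blocks (edge or triangle), whereas the paper first strips degree-$1$ vertices and then argues directly that a $2$-connected leaf block must be a triangle because any two non-adjacent vertices would lie on a cycle of length at least four.
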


\begin{proof}
We proceed by induction on $|V(G)|$. If $|V(G)|\leq 3$ then the claim holds trivially. Now assume that $|V(G)|\geq 4$. We may assume that $G$ is connected. Suppose that $\deg_G(v)=1$ for some $v\in V(G)$. By induction, 
 $G-v$ has a matching $M$ such that $(G-v)/M$ is a forest. Since $\deg_G(v)=1$, 
 $G/M$ is a forest. Now assume that $G$ has minimum degree at least 2. Let $B$ be a leaf block of $G$. Since $G$ has minimum degree at least 2, $B$ is 2-connected. If $B$ has two non-adjacent vertices $v$ and $w$, then a cycle through $v$ and $w$ has length at least four, contradicting that $G$ is a triangle-forest. So $B$ induces a triangle. Say $V(B)=\{u,v,w\}$ where $w$ is the cut-vertex separating $\{u,v\}$ from $G-V(B)$. By induction, $G-u-v$ has a matching $M$ such that $(G-u-v)/M$ is a forest. Hence $M':=M\cup\{uv\}$ is a matching in $G$ such that $G/M'$ is a forest.  
 \end{proof}

\cref{PSeudoForestMatching,PlanarPartition} imply:

\begin{cor}
\label{PlanarMatching}
Every planar graph $G$ has a matching $M$ such that $G/M$ has a vertex-partition into two induced forests. 
\end{cor}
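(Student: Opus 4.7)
The plan is to combine \cref{PlanarPartition} with \cref{PSeudoForestMatching} in the obvious way. First, apply \cref{PlanarPartition} to $G$ to obtain a vertex-partition $\{V_1,V_2\}$ such that each induced subgraph $G[V_i]$ is a triangle-forest. Then apply \cref{PSeudoForestMatching} to each triangle-forest $G[V_i]$ to obtain a matching $M_i\subseteq E(G[V_i])$ such that $G[V_i]/M_i$ is a forest.

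Let $M:=M_1\cup M_2$. Since $V_1$ and $V_2$ are disjoint and each $M_i$ lies entirely inside $G[V_i]$, the union $M$ is still a matching in $G$. Moreover, because every edge in $M_i$ has both endpoints in $V_i$, contracting $M$ in $G$ does not merge any vertex of $V_1$ with any vertex of $V_2$; so the partition $\{V_1,V_2\}$ descends to a vertex-partition $\{V_1',V_2'\}$ of $G/M$, where $V_i'$ consists of the vertices of $G[V_i]/M_i$. The induced subgraph $(G/M)[V_i']$ is precisely $G[V_i]/M_i$, which is a forest. This gives the required vertex-partition of $G/M$ into two induced forests.

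There is essentially no obstacle: the only mild thing to check is that $M_1\cup M_2$ remains a matching (immediate from $V_1\cap V_2=\emptyset$) and that contracting inside each part does not create edges between parts (also immediate, since contractions along edges with both endpoints in $V_i$ do not change the bipartition between $V_1$ and $V_2$). Hence the corollary follows.
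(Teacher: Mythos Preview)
Your argument is correct and is exactly the intended one: the paper states the corollary as an immediate consequence of \cref{PlanarPartition} and \cref{PSeudoForestMatching}, and you have simply spelled out the obvious combination. The checks you make (that $M_1\cup M_2$ is a matching and that $(G/M)[V_i']=G[V_i]/M_i$) are the only things to verify, and they are straightforward.
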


For an integer $k\geq 0$, a graph $G$ is \defn{$k$-apex} if $G-A$ is planar for some $A\subseteq V(G)$ with $|A|\leq k$. A graph $G$ is an \defn{apex-forest} if $G-A$ is a forest for some $A\subseteq V(G)$ with $|A|\leq 1$. Every apex-forest has treewidth at most 2. Thus, the next result implies and strengthens \cref{Planar222}. 

\begin{thm}
\label{Apex} 
Every $2$-apex graph $G$ is contained in $H_1 \StrongProd H_2 \StrongProd K_2$ for some apex-forests $H_1$ and $H_2$.
\end{thm}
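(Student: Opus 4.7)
The plan is to handle the apex and non-apex parts separately: apply \cref{PlanarMatching} to the planar part $G':=G-A$ and route all adjacencies of $G$ through three product factors, using the $K_2$ factor to simultaneously serve the contracted matching edges and the edge $a_1a_2$ between the two apex vertices.

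First, let $A\subseteq V(G)$ with $|A|\leq 2$ satisfy that $G':=G-A$ is planar; after padding $A$ with isolated vertices if necessary, I may assume $A=\{a_1,a_2\}$. Applying \cref{PlanarMatching} to $G'$ gives a matching $M\subseteq E(G')$ and a partition $\{V_1,V_2\}$ of $V(G'/M)$ such that each $F_i:=(G'/M)[V_i]$ is a forest. I then take $H_i$ to be the apex-forest obtained from $F_i$ by adding a vertex $a_i^*$ adjacent to every vertex of $F_i$; the ambient product is $H_1\StrongProd H_2\StrongProd K_2$ with $V(K_2)=\{1,2\}$.

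Next, I construct the embedding $\phi\colon V(G)\to V(H_1\StrongProd H_2\StrongProd K_2)$. For $v\in V(G')$, let $z(v)\in V(G'/M)$ be the vertex containing $v$; when $z(v)=\{u,u'\}$ is a contracted matching edge I assign distinct labels $k(u),k(u')\in\{1,2\}$, and otherwise set $k(v):=1$. Then
\[
\phi(v):=\begin{cases}(z(v),a_2^*,k(v)) & \text{if }z(v)\in V_1,\\(a_1^*,z(v),k(v)) & \text{if }z(v)\in V_2,\end{cases}
\]
and $\phi(a_1):=(a_1^*,a_2^*,1)$, $\phi(a_2):=(a_1^*,a_2^*,2)$. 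Injectivity is immediate because $a_i^*\notin V(F_i)$, distinct vertices of $G'/M$ have distinct images, and matched pairs are separated by $k$.

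Finally, I verify edge preservation by cases: \textbf{(i)} the edge $a_1a_2$ is routed through the $K_2$-coordinate; \textbf{(ii)} an edge from some $a_i$ to $w\in V(G')$ is routed through the dominant apex-edge in the $H_i$-coordinate, the other two coordinates causing no obstruction; \textbf{(iii)} an edge $vw\in E(G')$ with $z(v),z(w)$ in the same $V_i$ splits into the sub-case $vw\in M$ (handled by $K_2$, since then $z(v)=z(w)$) and $vw\notin M$ (handled by the $F_i$-edge $z(v)z(w)$); \textbf{(iv)} an edge $vw\in E(G')$ with $z(v)\in V_1$ and $z(w)\in V_2$ is handled by the apex edges $z(v)a_1^*$ in $H_1$ and $a_2^*z(w)$ in $H_2$, the $K_2$-coordinate never obstructing because $K_2$ is a clique. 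I do not foresee a serious obstacle: all the structural content is encapsulated in \cref{PlanarMatching} (via \cref{PlanarPartition,PSeudoForestMatching}), leaving only the routine bookkeeping of routing each of the four edge types through the appropriate product factor and checking injectivity.
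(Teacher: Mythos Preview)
Your proof is correct and follows essentially the same approach as the paper. The paper packages the embedding more succinctly by invoking \cref{JoinTreewidth} and the fact $G\subsetsim(G/M)\boxtimes K_2$ (with $M=M'\cup\{a_1a_2\}$), but unwinding those two ingredients yields precisely your explicit map~$\phi$.
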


\begin{proof}
We may assume that $G$ has an edge $ab$ such that $G':=G-a-b$ is planar. By \cref{PlanarMatching,JoinTreewidth}, $G'$ has a matching $M'$ such that $(G'/M')^+ \subsetsim H_1\boxtimes H_2$ for some apex-forests $H_1$ and $H_2$. Thus, for the matching $M:=M'\cup \{ab\}$ we have $G/M \subsetsim (G'/M')^+ \subsetsim H_1\boxtimes H_2$. Hence $G\subsetsim H_1\boxtimes H_2\boxtimes K_2$. Here we use the fact that if $M$ is a matching in a graph $G$, then $G\subsetsim (G/M)\boxtimes K_2$.
\end{proof}

Note that an analogous proof shows that every $k$-apex graph $G$ is contained in $H_1 \StrongProd H_2 \StrongProd K_{\max\{k,2\}}$ for some apex-forests $H_1$ and $H_2$.

\section{Proof of \cref{PlanarLowerBound}}

To prove \cref{PlanarLowerBound} it will be convenient to use the language of partitions. A \defn{partition} $\PP$ of a graph $G$ is a partition of $V(G)$, where each element of $\PP$ is called a \defn{part}. For a partition $\PP$ of a graph $G$, let \defn{$G / \PP$} be the graph obtained from $G$ by identifying the vertices in each non-empty part of $\PP$ to a single vertex; that is, $V(G / \PP)$ is the set of non-empty parts in $\PP$, where distinct parts $P_1,P_2 \in \PP$ are adjacent in $G / \PP$ if and only if  there exist $v_1 \in P_1$ and $v_2 \in P_2$ such that $v_1v_2 \in E(G)$. A partition $\PP$ of a graph $G$ is a \defn{tree-partition} if $G/\PP$ is contained in a tree, and $\PP$ is a \defn{star-partition} if $G/\PP$ is contained in a star. 

The next observation characterises when a graph is contained in $H_1 \StrongProd H_2 \StrongProd K_c$. We include the proof for completeness.

\begin{obs}[\citep{LNW}]
\label{PartitionProduct}
For any graphs $H_1,H_2$ and any $c\in\NN$, a graph $G$ is contained $H_1\StrongProd H_2 \StrongProd K_c$ if and only if $G$ has partitions $\PP_1$ and $\PP_2$ such that $G/\PP_i \subsetsim H_i$ for each $i\in\{1,2\}$, and $|A_1\cap A_2|\leq c$ for each $A_1\in\PP_1$ and $A_2\in\PP_2$. 
\end{obs}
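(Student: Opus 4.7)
The plan is to verify both directions of the equivalence by exploiting the natural correspondence between vertices of $H_1 \StrongProd H_2 \StrongProd K_c$ and triples in $V(H_1) \times V(H_2) \times V(K_c)$, using the projections onto the first two coordinates to produce the partitions and vice versa.

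For the forward direction, suppose $G \subsetsim H_1 \StrongProd H_2 \StrongProd K_c$ via an embedding sending each $v \in V(G)$ to a triple $(h_1(v), h_2(v), k(v))$. I would define $\PP_i$ to be the partition of $V(G)$ into the fibres of the map $v \mapsto h_i(v)$, i.e.\ group vertices by their $i$-th coordinate. Then for any $A_1 \in \PP_1$ and $A_2 \in \PP_2$, the vertices in $A_1 \cap A_2$ all share the same first two coordinates, and since there are only $c$ triples with a given prefix $(h_1, h_2)$, we get $|A_1 \cap A_2| \leq c$. To see $G/\PP_1 \subsetsim H_1$, observe that if two distinct parts $A, A' \in \PP_1$ are joined in $G/\PP_1$ by an edge witnessed by $vv' \in E(G)$, then $(h_1(v), h_2(v), k(v))$ and $(h_1(v'), h_2(v'), k(v'))$ are adjacent in the product; because $h_1(v) \neq h_1(v')$, the definition of $\StrongProd$ forces $h_1(v) h_1(v') \in E(H_1)$, giving the required edge of $H_1$. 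The same argument handles $\PP_2$.

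For the reverse direction, assume the partitions $\PP_1, \PP_2$ are given with $G/\PP_i \subsetsim H_i$ via some embedding $\varphi_i : V(G/\PP_i) \to V(H_i)$. For each vertex $v \in V(G)$, let $A_i(v) \in \PP_i$ be its part. Since $|A_1(v) \cap A_2(v)| \leq c$, I can choose an injection $k : A_1(v) \cap A_2(v) \to V(K_c)$ for each non-empty intersection, and define the map $\psi : V(G) \to V(H_1) \times V(H_2) \times V(K_c)$ by $\psi(v) := (\varphi_1(A_1(v)), \varphi_2(A_2(v)), k(v))$. Injectivity of $\psi$ follows from injectivity of $k$ on each intersection $A_1 \cap A_2$. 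For an edge $vw \in E(G)$, one checks that $\varphi_1(A_1(v))$ equals or is adjacent to $\varphi_1(A_1(w))$ in $H_1$ (depending on whether $v,w$ lie in the same part of $\PP_1$), and similarly in $H_2$; together with the fact that $K_c$ is complete on its vertex set and any two distinct coordinates in a triple differ, this places $\psi(v)\psi(w)$ in $E(H_1 \StrongProd H_2 \StrongProd K_c)$.

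No step is truly difficult here; the only point requiring a little care is bookkeeping when $v, w$ lie in the same part of some $\PP_i$, so that $\varphi_i(A_i(v)) = \varphi_i(A_i(w))$ and the adjacency in the product comes from the other two coordinates (using that at least one coordinate must differ since $\psi$ is injective). I expect the write-up to be only a few lines and essentially symmetric between the two directions.
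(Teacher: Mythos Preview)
Your proposal is correct and follows essentially the same approach as the paper: project onto the first two coordinates to obtain $\PP_1,\PP_2$ in the forward direction, and in the reverse direction map $v$ to $(\varphi_1(A_1(v)),\varphi_2(A_2(v)),k(v))$ using an injection $k$ on each intersection. Your write-up is actually a bit more explicit than the paper's about why $G/\PP_i\subsetsim H_i$ and about the case analysis for edges in the reverse direction, but the argument is the same.
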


\begin{proof}
$(\Rightarrow)$ Assume $G$ is contained $H_1\StrongProd H_2 \StrongProd K_c$. That is, there is an isomorphism $\phi$ from $G$ to a subgraph of $H_1\StrongProd H_2 \StrongProd K_c$. 
For each vertex $x\in V(H_1)$, let $A_x:=\{v\in V(G): \phi(v)\in \{x\}\times V(H_2)\times V(K_c)\}$. 
Similarly, for each vertex $y\in V(H_2)$, let $B_y:=\{v\in V(G): \phi(v)\in V(H_1)\times \{y\}\times V(K_c)\}$. 
Let $\PP_1:=\{A_x:x\in V(H_1)\}$ and $\PP_2:=\{B_y:y\in V(H_2)\}$, which are partitions of $G$. By construction, $G/\PP_i\subsetsim H_i$ for each $i\in\{1,2\}$. For $A_x\in \PP_1$ and $B_y\in \PP_2$, if $v\in A_x\cap B_y$ then $\phi(v)\in \{x\}\times \{y\}\times V(K_c)$. Thus $|A_x\cap B_y|\leq c$. 
	
$(\Leftarrow)$ Assume $G$ has partitions $\PP_1$ and $\PP_2$ such that $G/\PP_i\subsetsim H_i$ for each $i\in\{1,2\}$, and $|A_1\cap A_2|\leq c$ for each $A_1\in\PP_1$ and $A_2\in\PP_2$. Let $\phi_i$ be an isomorphism from $G/\PP_i$ to a subgraph of $H_i$. For each $A_1\in\PP_1$ and $A_2\in\PP_2$, enumerate the at most $c$ vertices in $A_1\cap A_2$, and map the $i$-th such vertex to $(\phi_1(A_1),\phi_2(A_2),i)$. This defines an isomorphism from $G$ to a subgraph of $H_1\StrongProd H_2 \StrongProd K_c$, as desired. 
\end{proof}

As illustrated in \cref{Fans}, a graph $F$ is a \defn{fan} if $F$ has a dominant vertex $v$, called the \defn{centre} of $F$, such that $F-v$ is a path. A graph $F$ is a \defn{double-fan} if $F$ has two dominant vertices $v$ and $w$, called the \defn{centres} of $F$, such that $F-v-w$ is a path. Note that every double-fan is a planar triangulation. 

\begin{figure}[!h]
\centering 
(a) \includegraphics{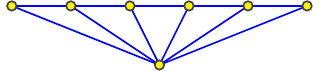} 
\qquad 
(b) \includegraphics{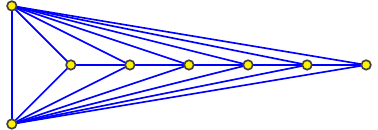} 
\caption{(a) fan, (b) double-fan}
\label{Fans}
\end{figure}

Throughout the following proofs, we use the following convention: if $\PP$ and $\QQ$ are partitions of a graph $G$, and $v_i\in V(G)$, then let \defn{$P_i$} be the part of $\PP$ with $v_i\in P_i$, and let \defn{$Q_i$} be the part of $\QQ$ with $v_i\in Q_i$. Of course, it is possible that $P_i=P_j$ or $Q_i=Q_j$ for distinct vertices $v_i,v_j$. 

\begin{lem}
\label{FanTwoParts}
For any $c\in \NN$, let $F$ be a fan on at least $c^2+c+1$ vertices with centre $v_1$. Let $\PP,\QQ$ be partitions of $F$ such that $\PP$ is a tree-partition and $|P\cap Q|\leq c$ for all $P\in \PP$ and $Q\in \QQ$. Then there exists $v_2\in V(F-v_1)$ 
such that $Q_1\neq Q_2$. 
\end{lem}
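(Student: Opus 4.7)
The plan is to argue by contradiction. Suppose no such $v_2$ exists; then every vertex of $F - v_1$ lies in the same $\QQ$-part as $v_1$, forcing $Q_1 = V(F)$. Combined with the hypothesis $|P \cap Q| \leq c$, this immediately yields $|P| \leq c$ for every $P \in \PP$. In particular, since $v_1 \in P_1$, at most $c - 1$ vertices of $V(F - v_1)$ lie in $P_1$.

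The key step is the following triangle argument. Suppose some edge $uu'$ of the path $F - v_1$ has $u \in P$ and $u' \in P'$ with $P, P', P_1$ pairwise distinct. Since $v_1$ is dominant in $F$, every part of $\PP$ other than $P_1$ contains some vertex of $V(F - v_1)$ and is therefore adjacent to $P_1$ in $F / \PP$. Together with the edge $P P'$ of $F / \PP$ arising from $uu'$, this produces a triangle $P P' P_1$ in $F / \PP$, contradicting the assumption that $\PP$ is a tree-partition. Hence any two consecutive vertices of the path $F - v_1$ both avoiding $P_1$ must belong to the same part of $\PP$.

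Now delete from the path $F - v_1$ the at most $c - 1$ vertices of $P_1 \cap V(F - v_1)$. This leaves at most $c$ subpaths, and by the previous paragraph the vertex set of each subpath is contained in a single non-$P_1$ part of $\PP$ and hence has size at most $c$. Summing gives $|V(F - v_1)| \leq c \cdot c + (c - 1) = c^2 + c - 1$, contradicting the hypothesis $|V(F)| \geq c^2 + c + 1$.

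The main obstacle is the triangle argument: it crucially exploits the dominance of $v_1$ to ensure that every non-$P_1$ part of $\PP$ is already adjacent to $P_1$ in $F / \PP$, so any further adjacency between two distinct non-$P_1$ parts forces a $3$-cycle and violates the tree-partition hypothesis. The remainder of the proof is then just an application of the pigeonhole-style count on the number of path vertices landing in $P_1$ versus those split among non-$P_1$ parts.
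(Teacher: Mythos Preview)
Your proof is correct and follows essentially the same approach as the paper: both exploit that dominance of $v_1$ forces $\PP$ to be a star-partition centred at $P_1$ (your triangle argument is exactly this observation spelled out), then remove the at most $c-1$ vertices of $P_1$ from the path $F-v_1$ and count. The only cosmetic difference is that you argue by contradiction (assuming $Q_1=V(F)$, which gives $|P|\leq c$ for every $P$ and avoids a small case split), whereas the paper works directly and finds the witness $v_2$ inside a large component of $B-(Q_1\cap P_1)$.
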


\begin{proof}
Let $B$ be the path $F-v_1$. Since $v_1$ is dominant in $F$, and $\PP$ is a tree-partition, $\PP$ is a star-partition with centre $P_1$. Since $|P_1\cap Q_1|\leq c$ and $v_1\in P_1\cap Q_1$, we have $|V(B)\cap Q_1\cap P_1|\leq c-1$. Thus $B-(Q_1\cap P_1)$ has at most $c$ components. Since $|V(B)|>(c-1)+c^2$, there is a path component  $B'$ of $B-(Q_1\cap P_1)$ on at least $c+1$ vertices. If $B'$ contains a vertex $v_2\in P_1$, then $v_2\notin Q_1$, as desired. So we may assume that $B'$ does not intersect $P_1$. Thus $B'$ is contained in a single part $P_2\in \PP$. Since $|P_2\cap Q_1|\leq c<|V(B')|$, there exists $v_2\in B'$ such that $v_2\notin Q_1$, as desired.
\end{proof}

\begin{lem}
   \label{K4inWagner}
    For any $c\in \NN$, let $F$ be a double-fan on at least $8c^2+2c+1$ vertices. Let $\PP,\QQ$ be partitions of $F$ such that $\PP$ is a tree-partition and $|P\cap Q|\leq c$ for all $P\in \PP$ and $Q\in \QQ$. Let $v_1,v_2$ be the centres of $F$. If $P_1\neq P_2$ and $Q_1\neq Q_2$, then there exist vertices $v_3,v_4$ such that $\{v_1,v_2,v_3,v_4\}$ is a $4$-clique and $Q_1,Q_2,Q_3,Q_4$ are pairwise distinct.
\end{lem}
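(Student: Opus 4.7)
The first move is a structural observation: since $\PP$ is a tree-partition, $F/\PP$ is a forest, and since $v_1,v_2$ are dominant in $F$, any third part $P_3\in\PP$ would be adjacent in $F/\PP$ to both $P_1$ and $P_2$. Combined with the edge $P_1P_2$ in $F/\PP$ (coming from $v_1v_2\in E(F)$ and $P_1\neq P_2$), this would give a triangle $P_1P_2P_3$ in $F/\PP$, a contradiction. Therefore $\PP=\{P_1,P_2\}$, and in particular $V(F)=P_1\cup P_2$. This is the key insight that unlocks the proof: once $\PP$ has only two parts, each $Q$-part has at most $2c$ vertices in all of $F$.

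Second, I would use this to extract a long sub-path of $B:=F-v_1-v_2$ that avoids $Q_1\cup Q_2$. Since $|Q_i|\leq |Q_i\cap P_1|+|Q_i\cap P_2|\leq 2c$ for $i\in\{1,2\}$, at most $4c-2$ vertices of $B$ lie in $Q_1\cup Q_2$ (using $v_1,v_2\notin V(B)$). Deleting these from the path $B$ leaves at most $4c-1$ sub-path components on at least $|V(B)|-(4c-2)$ vertices in total. The hypothesis $|V(F)|\geq 8c^2+2c+1$ is chosen precisely so that pigeonhole yields a sub-path $B'$ with $|V(B')|\geq 2c+1$ that is disjoint from $Q_1\cup Q_2$.

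Finally, since $V(B')\subseteq P_1\cup P_2$, every part $Q\in\QQ$ satisfies $|Q\cap V(B')|\leq 2c<|V(B')|$. Hence $B'$ meets at least two distinct $Q$-parts, and walking along the path $B'$ I can find consecutive vertices $v_3,v_4\in V(B')$ with $Q_3\neq Q_4$. By construction $Q_3,Q_4\notin\{Q_1,Q_2\}$, so $Q_1,Q_2,Q_3,Q_4$ are pairwise distinct. Dominance of $v_1,v_2$ together with the edge $v_3v_4$ inherited from the path $B$ yields the required $4$-clique $\{v_1,v_2,v_3,v_4\}$.

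The main obstacle I expect is the first paragraph — specifically, noticing that the tree-partition hypothesis combined with two dominant vertices collapses $\PP$ to exactly two parts. Once this structural reduction is in hand, the remaining content is a careful pigeonhole argument on the spine path of the double-fan, and the stated lower bound $8c^2+2c+1$ arises naturally from counting the obstructions $Q_1\cup Q_2$ together with the within-part bound $2c$.
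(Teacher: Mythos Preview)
Your proof is correct and follows essentially the same route as the paper's: first argue that the tree-partition $\PP$ collapses to $\{P_1,P_2\}$ (so every $Q$-part has size at most $2c$), then remove $Q_1\cup Q_2$ from the spine path $B$ and use pigeonhole to find a long sub-path $B'$ on at least $2c+1$ vertices, which must contain an edge $v_3v_4$ crossing between two $Q$-parts distinct from $Q_1,Q_2$. Your justification for $V(F)=P_1\cup P_2$ via the triangle in $F/\PP$ is slightly more explicit than the paper's one-line version, but otherwise the arguments coincide.
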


\begin{proof}
    Let $B$ be the path $F-v_1-v_2$. 
    Since $v_1$ and $v_2$ are both adjacent to every vertex in $B$, and $v_1$ and $v_2$ are in distinct parts of $\PP$, and since $\PP$ is a tree-partition, $V(F)=P_1\cup P_2$. Since $|P\cap Q|\leq c$ for all $P\in \PP$ and $Q\in \QQ$, this means that $|Q|\leq 2c$ for each $Q\in \QQ$. In particular, since $v_1\in Q_1$ and $v_2\in Q_2$, this means that $|B\cap (Q_1\cup Q_2)|\leq 4c-2$. Thus $B-(Q_1\cup Q_2)$ has at most $4c-1$ components. Since $|V(B)|\geq 8c^2+2c-1 > (4c-2) + (4c-1)2c$, some path component  $B'$ of $B-(Q_1\cup Q_2)$ has at least $2c+1$ vertices. Since  $|Q|\leq 2c$ for each $Q\in \QQ$, $B'$ intersects at least two parts of $\QQ$. In particular, there is an edge $v_3v_4$ of $B'$ such that $Q_3\neq Q_4$. By the choice of $B'$, we have $\{Q_3,Q_4\}\cap \{Q_1,Q_2\}=\emptyset$. Thus $\{v_1,v_2,v_3,v_4\}$ is the desired $4$-clique.
\end{proof}

A \defn{distension} of a graph $G$ is any graph \defn{$\widehat{G}$} obtained from $G$ by adding, for each edge $vw$ of $G$, a path $P_{vw}$ complete to $\{v,w\}$, where $P_{vw}\cap G=\emptyset$ and $P_{vw}\cap P_{ab}=\emptyset$ for distinct $vw,ab\in E(G)$. Here `complete to' means that each vertex in $P_{vw}$ is adjacent to both $v$ and $w$. Note that $\widehat{G}[V(P_{vw})\cup\{v,w\}]$ is a double-fan, which we denote by \defn{$\widehat{G}_{vw}$}. We say $\widehat{G}$ is the \defn{$t$-distension} of $G$ if $|V(P_{vw})|=t$ for each edge $vw\in E(G)$. Observe that every distension of a planar graph is planar. 

\begin{lem}
\label{planarLBCounterExample}
For any $c\in \NN$, there exists a planar graph $G$ such that for any tree-partition $\PP$ of $G$ and for any partition $\QQ$ of $G$ with $|P\cap Q|\leq c$ for each $P\in \PP$ and $Q\in \QQ$, there is a 4-clique $\{v_1,v_2,v_3,v_4\}$ in $G$ such that $Q_1,Q_2,Q_3,Q_4$ are pairwise distinct.
\end{lem}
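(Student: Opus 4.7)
My plan is to take $G := \widehat{G_0}$, where $G_0$ is the planar double-fan with centres $a, b$ and spine $u_1, \ldots, u_N$, choosing the distension length $t := 8c^2 + 2c - 1$ so each double-fan $\widehat{G_0}_{vw}$ in $G$ satisfies the hypothesis of \cref{K4inWagner}, and $N$ a sufficiently large polynomial in $c$. Given $\PP$ and $\QQ$ as in the statement, the overarching goal is to locate an edge $vw \in E(G_0)$ with $P_v \neq P_w$ and $Q_v \neq Q_w$, after which \cref{K4inWagner} applied to $\widehat{G_0}_{vw}$ produces the desired 4-clique with pairwise distinct $\QQ$-parts.

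The tree-partition triangle $a u_i b$ forces at most two distinct parts among $\{P_a, P_b, P_{u_i}\}$. Hence if $P_a \neq P_b$ then $V(G_0) \subseteq P_a \cup P_b$, and either $Q_a \neq Q_b$ (use the edge $ab$) or avoiding ``green'' edges (those with both $P$- and $Q$-parts different) forces every $u_i \in Q_a$, contradicting $|P_a \cap Q_a| + |P_b \cap Q_a| \leq 2c$ once $N > 2c - 2$. If instead $P_a = P_b =: P$ and $Q_a \neq Q_b$, an analogous argument on the pairs $(a u_i, b u_i)$ forces $V(G_0) \subseteq P$; then pigeon-holing the path $u_1, \ldots, u_N$ using $|P \cap Q_a|, |P \cap Q_b| \leq c$ yields consecutive $u_i, u_{i+1}$ with $\{Q_a, Q_b, Q_{u_i}, Q_{u_{i+1}}\}$ pairwise distinct, so $\{a, b, u_i, u_{i+1}\}$ is the required 4-clique already inside $G_0$.

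The residual (hardest) case is $P_a = P_b =: P$ and $Q_a = Q_b =: Q$, in which the 4-clique $\{a, b, u_i, u_{i+1}\}$ always repeats $Q$, so the 4-clique must be found within a distension path. Assuming no $G_0$-edge is green, I would classify each $u_i$ by type: $A = P \cap Q$, $B = P \setminus Q$, and $C = Q \setminus P$. The constraint $|P \cap Q| \leq c$ gives $|A| \leq c - 2$; the forbidden B--C adjacency together with the star structure of $\PP$ around $P$ forces each maximal C-run to lie in a single tree-neighbour of $P$, giving $|C| = O(c^2)$. Hence a long B-run exists, inside which pigeon-hole produces consecutive $u_i, u_{i+1}$ of type B with $Q_{u_i} \neq Q_{u_{i+1}}$, and the sub-double-fan $\widehat{G_0}_{u_i u_{i+1}}$ in $G$ becomes a ``$P$-same, $Q$-different centres'' instance at the next level. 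Reapplying the previous paragraph's counting argument inside this sub-double-fan locates consecutive distension vertices $q, q'$ with $\{Q_{u_i}, Q_{u_{i+1}}, Q_q, Q_{q'}\}$ pairwise distinct, yielding the 4-clique $\{u_i, u_{i+1}, q, q'\}$. The main obstacle is this final recursive step: the distension-path vertices of the sub-double-fan lie outside $V(G_0)$, so the non-green assumption does not constrain their $\PP$-parts a priori, and controlling how many tree-neighbours of $P$ the sub-path visits requires careful use of the triangle-in-a-tree-partition constraint combined with the $c$-compatibility bound, with $t$ and $N$ chosen as sufficiently large polynomials in $c$ to absorb all constants.
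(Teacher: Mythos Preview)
Your overall strategy---locate an edge $vw$ with $P_v\neq P_w$ and $Q_v\neq Q_w$, then invoke \cref{K4inWagner} on the double-fan $\widehat{G_0}_{vw}$---matches the paper. Your handling of cases $P_a\neq P_b$ and of $P_a=P_b,\,Q_a\neq Q_b$ is correct. The genuine gap is exactly the one you flag as the ``main obstacle'': one level of distension is not enough for the recursion in the case $P_a=P_b,\,Q_a=Q_b$.

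Concretely, after you find $u_i,u_{i+1}\in P$ with $Q_{u_i}\neq Q_{u_{i+1}}$, you want to rerun the case-2 argument inside $\widehat{G_0}_{u_iu_{i+1}}$. But that argument used \cref{K4inWagner} on the double-fan attached to a centre--spine edge to dispose of any spine vertex outside $P$; at your second level there is no such attached double-fan, so you cannot force the spine into $P$. The fallback you suggest---bounding via the triangle-in-a-tree constraint plus $c$-compatibility---does not work, because the needed local statement is simply false. Take the double-fan $D$ with centres $x,y$ and spine $q_1,\dots,q_t$, set $P_0=\{x,y\}\cup\{q_j:j\text{ odd}\}$ and make each even $q_j$ a singleton part (so $D/\PP$ is a star with centre $P_0$); then put every even $q_j$ into $Q_x$ and distribute the odd $q_j$'s into fresh $\QQ$-parts of size $c$. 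This satisfies $|P\cap Q|\le c$ and $P_x=P_y$, $Q_x\neq Q_y$, yet every $4$-clique $\{x,y,q_j,q_{j+1}\}$ repeats $Q_x$. So no choice of $t$ saves the recursive step.

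The paper closes this gap by taking a \emph{double} distension, and moreover it starts from a \emph{fan} rather than a double-fan: \cref{FanTwoParts} already supplies $v_1,v_2$ with $Q_1\neq Q_2$, so your case 3 never arises. Then the argument has only two branches: if $P_1\neq P_2$, apply \cref{K4inWagner} to the first-level double-fan $J_{v_1v_2}$; if $P_1=P_2$, look at its spine $C$, and for any $v'\in C$ with $P_{v'}\neq P_1$ there is a second-level double-fan $G_{v'v_i}$ on which \cref{K4inWagner} applies. This is precisely the extra layer your recursion is missing. Your construction becomes correct if you replace $G=\widehat{G_0}$ by the $t$-distension of $\widehat{G_0}$.
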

    
\begin{proof}
Let $t:=8c^2+2c-1$. Let $F$ be the fan on $t+1$ vertices with centre $v_1$. Let $B$ be the path $F-v_1$. Let $J$ be the $t$-distension of $F$, and let $G$ be the $t$-distension of $J$. Since $F$ is planar, $J$ is planar, and $G$ is planar. Consider any tree-partition $\PP$ of $G$ and any partition $\QQ$ of $G$ with $|P\cap Q|\leq c$ for each $P\in \PP$ and $Q\in \QQ$. 

Since $|V(F)|=t+1\geq c^2+c+1$, by \cref{FanTwoParts} applied to $F$ and the induced partitions of $F$, there exists $v_2\in B$ such that $Q_1\neq Q_2$. Let $C$ be the $t$-vertex path $J_{v_1v_2}-v_1-v_2$. 
 
Since $|V(J_{v_1v_2})|=t+2=8c^2+2c+1$, if $P_1\neq P_2$ then by \cref{K4inWagner}, there  exist vertices $v_3,v_4$ such that $\{v_1,v_2,v_3,v_4\}$ is a $4$-clique $J_{v_1v_2}$ with $Q_1,Q_2,Q_3,Q_4$ pairwise distinct, as desired. So we may assume that $P_1=P_2$.
         
Consider any vertex $v'\in V(C)$. Let $P'\in\PP$ and $Q'\in\QQ$ such that $v'\in P'\cap Q'$. Note that $v'v_1$ and $v'v_2$ are edges of $J$, so the double-fans $G_{v'v_1}$ and $G_{v'v_2}$ exist. Since $Q_1\neq Q_2$, there exists $i\in \{1,2\}$ such that $Q'\neq Q_i$. If $P'\neq P_1$ (and thus $P'\neq P_2$), since 
$|V(G_{v'v_i})|=t+2=8c^2+2c+1$, by \cref{K4inWagner} there exists a 4-clique 
$\{v_i,v',v_3,v_4\}$ in $G_{v'v_i}$ with $Q_i,Q',Q_3,Q_4$ pairwise distinct, as desired.

So we may assume that $V(C)\subseteq P_1$. Since $v_i\in P_1\cap Q_i$ for $i\in \{1,2\}$, and since $|P_1\cap Q|\leq c$ for all $Q\in \QQ$, we have $|C\cap (Q_1\cup Q_2)|\leq 2c-2$. So $C-(Q_1\cup Q_2)$ has at most $2c-1$ components. Since $|V(C)|=t=8c^2+2c-1> (2c-1)c+(2c-2)$, there exists a path component $C'$ of $C-(Q_1\cup Q_2)$ with at least $c+1$ vertices. Note that $C'\subseteq P_1$ also. Since $|P_1\cap Q|\leq c$ for all $Q\in \QQ$, there is an edge  $v_3v_4$ in $C'$ such that $Q_3\neq Q_4$. By the choice of $C'$, $\{Q_3,Q_4\}\cap \{Q_1,Q_2\}=\emptyset$. So $\{v_1,v_2,v_3,v_4\}$ is the desired $4$-clique in $G$.
\end{proof}

The next result follows from \cref{planarLBCounterExample,PartitionProduct}, which implies \cref{PlanarLowerBound}.

\begin{thm}
\label{MainLowerBound}
For any $c\in\NN$ there exists a planar graph $G$ such that for any tree $T$ and graph $H$, if $G\subsetsim H\boxtimes T \boxtimes K_c$ then $K_4\subsetsim H$ and $\tw(H)\geq 3$. 
\end{thm}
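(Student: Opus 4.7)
The plan is to derive this theorem as an essentially immediate consequence of \cref{planarLBCounterExample} together with \cref{PartitionProduct}. Let $G$ be the planar graph produced by \cref{planarLBCounterExample} for the given $c$. Suppose, for the sake of proving the implication, that $G \subsetsim H\boxtimes T \boxtimes K_c$ for some tree $T$ and graph $H$. Applying \cref{PartitionProduct} with $H_1 := T$ and $H_2 := H$ produces partitions $\PP$ and $\QQ$ of $G$ with $G/\PP \subsetsim T$, $G/\QQ \subsetsim H$, and $|P \cap Q| \leq c$ for all $P \in \PP, Q \in \QQ$. Because $G/\PP$ is contained in a tree, $\PP$ is by definition a tree-partition, which is exactly the hypothesis needed to apply \cref{planarLBCounterExample}.

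Invoking \cref{planarLBCounterExample} on $G$ with the partitions $\PP,\QQ$ yields a $4$-clique $\{v_1,v_2,v_3,v_4\}$ in $G$ such that the parts $Q_1,Q_2,Q_3,Q_4$ of $\QQ$ containing these vertices are pairwise distinct. Since every pair $v_iv_j$ is an edge of $G$ and lies in distinct parts of $\QQ$, the four parts $Q_1,\dots,Q_4$ are pairwise adjacent in $G/\QQ$, so $G/\QQ$ contains a $K_4$ subgraph. Combining with $G/\QQ \subsetsim H$ gives $K_4 \subsetsim H$. Finally, treewidth is monotone under the subgraph relation and $\tw(K_4)=3$, so $\tw(H) \geq 3$. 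To finish, note that \cref{PlanarLowerBound} is then immediate: if $G$ were contained in $H \boxtimes T \boxtimes K_c$ with $\tw(H) \leq 2$, the above would force $\tw(H) \geq 3$, a contradiction.

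There is essentially no obstacle at this stage, since \cref{planarLBCounterExample} has done all the combinatorial work; the only thing to be careful about is identifying which factor of the product plays the role of the tree-partition in \cref{PartitionProduct} (the tree $T$) and which plays the role of the general partition $\QQ$ (the graph $H$), and then checking that $K_4 \subsetsim G/\QQ$ follows from having four pairwise-adjacent, pairwise-distinct parts. Both points are purely bookkeeping.
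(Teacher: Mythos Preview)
Your proposal is correct and follows exactly the approach indicated in the paper, which simply states that \cref{MainLowerBound} follows from \cref{planarLBCounterExample} and \cref{PartitionProduct}. You have merely spelled out the bookkeeping that the paper leaves implicit.
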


\cref{MainLowerBound} strengthens a result of 
\citet{DJMMUW20}, who proved it when $T$ is a path. 

The next lemma implies that the graph $G$ in \cref{MainLowerBound} has bounded treewidth and bounded pathwidth. In particular, since  $G$ is a distension of a distension of a fan $F$, and since $\tw(F)\leq\pw(F)\leq 2$, we have $\tw(G)\leq 3$ and $\pw(G)\leq \pw(\widehat{F})+2\leq \pw(F)+4\leq 6$. (With a more detailed analysis, one can prove \cref{MainLowerBound} with $\pw(G)\leq 4$ for a  slightly different graph $G$; we omit this result.)\ 

\begin{lem}
\label{TreewidthPathwidthBounds}
For any graph $G$ and any distension $\widehat{G}$ of $G$,
\[ \tw(\widehat{G}) \leq\max\{\tw(G),3\}
\quad\text{and}\quad
\pw(\widehat{G})\leq\pw(G)+2.\]
\end{lem}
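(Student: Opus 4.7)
The plan is to start with an optimal (path-)tree-decomposition of $G$ and splice a short gadget into it for each new fan path $P_{vw}$. The key observation is that each double-fan $\widehat{G}_{vw}$, with centres $v,w$ and path $P_{vw}=u_1u_2\cdots u_k$, admits the width-$3$ path-decomposition with bags
\[
\{v,w,u_1\},\ \{v,w,u_1,u_2\},\ \{v,w,u_2,u_3\},\ \ldots,\ \{v,w,u_{k-1},u_k\},\ \{v,w,u_k\},
\]
each of which contains the two centres together with at most two consecutive vertices of $P_{vw}$.

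For the treewidth bound, fix a tree-decomposition $(B_x:x\in V(T))$ of $G$ of width $\tw(G)$. For each edge $vw\in E(G)$, pick any node $x_{vw}\in V(T)$ with $\{v,w\}\subseteq B_{x_{vw}}$ and attach to $x_{vw}$, as a new pendant branch of the host tree, the path of bags displayed above (which is a decomposition of $\widehat{G}_{vw}$). Each new vertex $u\in V(P_{vw})$ appears only inside this branch, so its bags form a subtree; the vertices $v$ and $w$ remain in subtrees because their new occurrences are contained in a branch attached to a node where they already appear; every edge of $\widehat{G}$ is covered (old edges by the original decomposition, the edge $u_ju_{j+1}$ by the bag $\{v,w,u_j,u_{j+1}\}$, and the fan edges $u_jv,u_jw$ by any bag containing $u_j$). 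Each new bag has at most $4$ vertices, giving width $\max\{\tw(G),3\}$.

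For the pathwidth bound the construction is essentially the same, but gadgets must be spliced linearly. Starting with a path-decomposition $(B_1,\ldots,B_m)$ of $G$ of width $\pw(G)$, let $i(vw)$ be the least index with $\{v,w\}\subseteq B_{i(vw)}$ for each edge $vw\in E(G)$. Immediately after $B_i$, for each edge $vw$ with $i(vw)=i$ (in some arbitrary order), insert the bags $B_i\cup\{u_1\},B_i\cup\{u_1,u_2\},\ldots,B_i\cup\{u_{k-1},u_k\},B_i\cup\{u_k\}$. Since every inserted bag contains $B_i$, each has size at most $|B_i|+2\leq\pw(G)+3$, yielding width at most $\pw(G)+2$. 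The main obstacle is verifying the consecutive-bags property simultaneously for $G$-vertices and for the new vertices when several gadgets pile up at the same insertion point $B_i$: this is exactly why we use $B_i$ as a subset of each inserted bag. An old vertex $u$ appearing in $B_i$ is extended into the new bags but nothing disconnects its support, while an old vertex $u\notin B_i$ does not appear in any inserted bag, so its index set merely shifts contiguously; each new vertex $u_j\in V(P_{vw})$ lives only in the contiguous block for that particular edge. Edge coverage is checked exactly as in the tree case.
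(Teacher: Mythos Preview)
Your argument is correct and matches the paper's proof almost exactly: both attach, for each edge $vw$, a path of width-$3$ bags $\{v,w,u_j,u_{j+1}\}$ to a node containing $v,w$ for the treewidth bound, and splice the analogous bags $B_i\cup\{u_j,u_{j+1}\}$ into a path-decomposition for the pathwidth bound. The only cosmetic difference is in handling several edges sharing the same insertion point $B_i$: the paper first duplicates bags so that there is an injection $f:E(G)\to\{1,\dots,m\}$ with $\{v,w\}\subseteq B_{f(vw)}$ (so at most one gadget is ever inserted at a given position), whereas you insert all gadgets with $i(vw)=i$ consecutively after $B_i$ and rely on the fact that each inserted bag contains $B_i$ to preserve connectivity of old vertices' supports---both devices work and yield the same bound.
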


\begin{proof}
We first prove the treewidth bound. Consider a tree-decomposition $(B_x:x\in V(T))$ of $G$ with width $\tw(G)$. Apply the following operation for each edge $vw\in E(G)$. Let $x_0\in V(T)$ such that $v,w\in B_{x_0}$. Say $P_{vw}=(u_1,\dots,u_t)$ is the path complete to $\{v,w\}$ in $\widehat{G}$. Add new vertices $x_1,\dots,x_{t-1}$ and edges $x_0x_1,x_1x_2,\dots,x_{t-2}x_{t-1}$ to $T$. Let $B_{x_i}:=\{v,w,u_i,u_{i+1}\}$ for each $i\in\{1,\dots,t-1\}$. We obtain a tree-decomposition of $\widehat{G}$, in which  every new bag has size 4. Thus $\tw(\widehat{G})\leq\max\{\tw(G),3\}$. 

We now prove the pathwidth bound. Let $(B_1,\dots,B_m)$ be a path-decomposition of $G$ with width $\pw(G)$. By duplicating bags, we may assume there is an injection $f:E(G)\to\{1,\dots,m\}$ such that $v,w\in B_{f(vw)}$ for each edge $vw\in E(G)$. For each edge $vw\in E(G)$, if $i:=f(vw)$ and $P_{vw}=(u_1,\dots,u_t)$ as above, then insert the sequence of bags $B_i\cup\{u_1,u_2\},
B_i\cup\{u_2,u_3\},\dots,B_i\cup\{u_{t-1},u_t\}$ between $B_i$ and $B_{i+1}$. 
Since $f$ is an injection, 
we obtain a path-decomposition of $\widehat{G}$, in which each bag has size at most two more than the original bag. Thus $\pw(\widehat{G})\leq\pw(G)+2$. 
\end{proof}

\subsection*{Acknowledgements} 
Distel is supported by an Australian Government Research Training Program Scholarship.
Hendrey and Wood are supported by the Australian Research Council.
Karol and Yip are supported by Monash Graduate Scholarships.

\newcommand{\papertitle}[2]{\href{#1}{\textcolor{Navy}{#2}}}

{
\fontsize{10pt}{11pt}
\selectfont
\def\soft#1{\leavevmode\setbox0=\hbox{h}\dimen7=\ht0\advance \dimen7 by-1ex\relax\if t#1\relax\rlap{\raise.6\dimen7 \hbox{\kern.3ex\char'47}}#1\relax\else\if T#1\relax \rlap{\raise.5\dimen7\hbox{\kern1.3ex\char'47}}#1\relax \else\if d#1\relax\rlap{\raise.5\dimen7\hbox{\kern.9ex \char'47}}#1\relax\else\if D#1\relax\rlap{\raise.5\dimen7 \hbox{\kern1.4ex\char'47}}#1\relax\else\if l#1\relax \rlap{\raise.5\dimen7\hbox{\kern.4ex\char'47}}#1\relax \else\if L#1\relax\rlap{\raise.5\dimen7\hbox{\kern.7ex \char'47}}#1\relax\else\message{accent \string\soft \space #1 not defined!}#1\relax\fi\fi\fi\fi\fi\fi}

}

\end{document}